\newcommand{\C}{{\bf C}}
\newcommand{\Real}{\hbox{\kern .5pt \rm Re}\kern 1.3pt}
\newcommand{\Res}{\hbox{\kern .4pt\scriptsize\rm Re}\kern 1.1pt}
\newcommand{\ct}{\tilde c}
\newcommand{\et}{\tilde e}
\newcommand{\ft}{\tilde f}
\newcommand{\Et}{\tilde E}
\newcommand{\Dk}{D_k^{}}
\newcommand{\Dkr}{D_k^{(r)}}
\newcommand{\fk}{f_k^{}}
\newcommand{\xk}{x_k^{}}
\newcommand{\vark}{\varepsilon_k^{}}
\newcommand{\half}{{\textstyle{1\over 2}}}
\journalname{BIT}
\begin{document}

\title{Quantifying the ill-conditioning of analytic continuation}

\titlerunning{Analytic continuation} 

\author{Lloyd N. Trefethen}

\institute{Prof. L. N. Trefethen \at
Mathematical Institute\\
University of Oxford\\
Oxford, OX2 6GG, UK\\
\email{trefethen@maths.ox.ac.uk}}

\date{Received: date / Accepted: date}
% The correct dates will be entered by the editor

\maketitle

\begin{abstract}
Analytic continuation is ill-posed, but becomes merely
ill-conditioned (although with an infinite condition number) if it is
known that the function in question is bounded in a given region
of the complex plane.  In an annulus, the Hadamard three-circles
theorem implies that the ill-conditioning is not too severe,
and we show how this explains the effectiveness of Chebfun and related
numerical methods in
evaluating analytic functions off the interval of definition.
By contrast, we show that analytic continuation is far more
ill-conditioned in a strip or a channel, with exponential loss of
digits of accuracy at the rate $\exp(-\pi x/2)$ as one moves along.
The classical Weierstrass chain-of-disks method loses digits at
the faster rate $\exp(-e\kern .3pt x)$.

\keywords{analytic continuation, Hadamard three-circles theorem, Chebfun}
\subclass{30B40}
\end{abstract}

\section{Introduction}
\label{intro}
Analytic continuation is well known to be ill-posed.  To be
precise, suppose a function $f$ is analytic in a connected open
region $\Omega$ of the complex plane and we know its values in a
set $E\subset \overline\Omega$ to an accuracy of $\varepsilon>0$.
(We assume $E$ is a bounded nonempty continuum whose closure
does not enclose any points of $\Omega\backslash \overline E$,
and that $f$ extends analytically to $E$.)  This implies no bounds
whatsoever on the value of $f$ at any point $z\in \Omega\backslash
\overline E$ (see Theorem~\ref{genthm1}).  And yet if we knew
$f$ {\em exactly\/} in~$E$, this would determine its values in
$\Omega$ exactly.

In practice, nevertheless, analytic continuation from inexact
data is carried out all the time, and what makes this possible
is {\em regularization,} the introduction of additional
smoothness assumptions.  Often an extrapolation technique is
applied without such assumptions being made explicit---and
this is understandable, for in applications, often one has a
sense of certain features of one's function without
being able to pin them down precisely.  In this paper, however,
we wish to be completely explicit and show how certain natural
regularizing assumptions lead to upper and lower bounds
on the accuracy of analytic continuation.

Our regularizing assumption will be that $f$ is not only analytic
in $\Omega$, but bounded.  For example, we can take
the bound to be
$\half$ and consider the set of functions
that are analytic in $\Omega$ and satisfy $\|f\|_\Omega^{} \le \half$.
(The symbol $\|\cdot\|_A^{}$ always denotes the supremum
norm over the set $A$.)
If $f,\ft$ are two such functions, then $\|\ft-f\|_\Omega^{} \le 1$.  We shall show 
(Theorem~\ref{genthm2}) that
if in addition $\|\ft-f\|_E^{}\le \varepsilon$, then for
each $z\in \Omega\backslash \overline E$,
\begin{equation}
|\ft(z) -f(z)| \le \varepsilon^{\alpha(z)}
\label{bound1}
\end{equation}
for some $\alpha(z)\in (0,1)$ that depends on $\Omega$, $E$,
and $z$ but not on $f$ and $\ft$ or $\varepsilon$.
Another way to say the same thing is
\begin{equation}
\log |\ft(z) -f(z)| \le \alpha(z) \log \varepsilon.
\label{bound2}
\end{equation}
We may interpret (\ref{bound2}) as follows: if we know a function
$f$ satisfying $\|f\|_\Omega\le \half$ to~$d$ digits on $E$,
then it is determined to $\alpha(z) \kern .3pt d$ digits at~$z$.
Even though our theorems are, of course, mathematical rather than
computational results, we shall use the terminology of digits a
good deal in discussing them, for this is an easy way to talk about
logarithmic quantities.

This general framework may sound rather abstract.  It becomes
concrete when we consider the dependence of $\alpha(z)$ on $z$
for particular choices of $\Omega$ and $E$, and after stating
a basic lemma in Section~2, we shall focus on two choices that
are particularly fundamental.  The first is radial geometry,
with analytic continuation outward from the unit disk $E$ into
a disk $\Omega$ of radius $R>1$ (Section~3).  In this setting
analytic continuation is reasonably well-conditioned, with
digits of accuracy being lost only linearly as $|z|$ increases.
This observation possibly goes back to Hadamard himself, and its
numerical implications have been considered by various authors
including Miller~\cite{miller} and Franklin~\cite{franklin}.
An intuitive way to understand the effect is to note that in the
limiting case $R\to\infty$, Liouville's theorem implies that $f$
must be constant, so if we know $f$ to accuracy $\varepsilon$
on~$E$, we know it to the same accuracy everywhere.  The result for
finite $R$ (Theorem~\ref{thm1}) can be derived from the Hadamard
three-circles theorem~\cite{hille}.  The essence of the matter
is that analyticity and boundedness at a large radius imply rapid
exponential decrease of Taylor coefficients, hence good behavior
at smaller radii.

Analytic continuation is much more difficult in the other geometry
we focus on, which is linear (Section~4).  Here we take $\Omega$
to be an infinite half-strip of half-width~$1$ (without loss
of generality), and $E$ as the end segment of the half-strip.  As $z$
moves away from $E$ along the centerline of the strip, digits are
lost exponentially as a function of distance, and we prove this by
reducing the problem to the configuration of Lemma~\ref{thelemma}.
At a point $z$ that is $2\pi$ units away from the end, for
example, the number of accurate digits has shrunk by a factor
$(\pi/4)\exp(\pi^2)\approx \hbox{15,000}$, so if you want to
have $3$ digits of accuracy at such a point, you'll need to
start with 45,000 digits.  

Our formulations are conformally invariant, and thus different
regions $\Omega\ne \C$ can be transplanted from one to another.
In particular, analytic continuation in a disk and a half-strip
are essentially equivalent problems,
and the reason the half-strip is exponentially more difficult
than the disk is that the conformal map that relates them is an
exponential.  Section 5 explores results for general regions (not
necessarily simply connected) that follow from these observations,
presenting theorems establishing the behavior asserted in the
opening paragraphs of this introduction.

Along the way, we shall relate our results to numerical algorithms.
Section 3 presents a simple method for numerical analytic
continuation in a disk that approximately achieves the bounds
indicated in Theorem~\ref{thm1}, based on Taylor series on the
disk, and in Section 6 we show that this method is implicit
in Chebfun~\cite{chebfun,atap}.  An algorithm of this kind was
proposed by Franklin~\cite{franklin}, and there is
recent related work by Demanet and coauthors~\cite{batenkov,demtow},
among others.  Further numerical algorithms and associated
mathematical estimates for analytic continuation can be found
in~\cite{cannm,douglas,fdfd,fdfq,fzcm,hen66,henrici,miller,niet,reichel,stef,vessella}.
More generally, there is a large literature of numerical methods for
ill-posed problems, which are often defined by partial differential
or integral equations.  One paper that speaks of the connection
between analytic continuation and more general ill-posed problems
defined by PDE\kern .3pt s is~\cite{miller}.

In Section 7 we turn to the most famous algorithm of analytic
continuation, which goes back to Weierstrass:
marching Taylor expansions from one overlapping disk to another
in a chain.  We show that this method, if carried out
numerically in the half-strip with a certain optimal choice of
parameters, suffers exponential loss of accuracy at a rate $2\kern
.3pt e/\pi$ times faster than the optimal rate in a half-strip,
so that if one marches $2\pi$ units down the half-strip, the
number of accurate digits is divided by $\exp(2\pi e) \approx
\hbox{26,000,000}$. 

Before turning to the details, we comment on the relationship
between approximation methods based on multiple derivative values
at a single point, such as chain-of-disks continuation of Taylor
series or Pad\'e approximation~\cite{bgm}, and methods based just
on function values but at multiple points.  Our formulations are
of the latter form, but the two contexts are close.  Thanks to
the standard lemma of complex analysis known as Cauchy's estimate,
knowing a function $f$ on the unit disk to accuracy~$\varepsilon$
is approximately the same as knowing its Taylor coefficients $c_k$
to accuracy~$\varepsilon$, and more generally, if $f$ is known to
accuracy~$\varepsilon$ on the closed disk of radius $r$, this is
approximately the same as knowing its Taylor coefficients $c_k$
to accuracy~$\varepsilon \kern .5pt r^{-k}$.

\section{A lemma}
Our results are based on the following lemma,
the {\em Hadamard three-lines theorem,}
a more general form of which can be found in~\cite[Thm.~12.8]{rudin}.
Numerical algorithms for this geometry are discussed in~\cite{fdfq}.

\begin{lemma}
\label{thelemma}
Let\/ $h$ be an analytic function in the infinite strip
$S = \{ w: \, 0< \Real w < 1\}$
with $\|h\|_S^{}\le 1$ and\/ $\lim_{u\downarrow 0}^{} 
\sup_v|h(u+iv)| \le \varepsilon$ for some\/ $\varepsilon \in(0,1)$.  Then
for all $w\in S$,
\begin{equation}
\log|h(w)| \le (1-\Real w)\log\varepsilon,
\hbox{\quad i.e.,\quad}
|h(w)| \le \varepsilon^{1-\Res w}.
\label{lemmaeq}
\end{equation}
Conversely, for any\/ $\varepsilon\in(0,1)$, there is a function\/ $h$
satisfying the given conditions for which the inequalities
$(\ref{lemmaeq})$ hold as equalities for all\/ $z$ with $0 < \Real z
< 1$.
\end{lemma}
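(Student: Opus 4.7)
The plan is to apply a Phragm\'en--Lindel\"of-type argument to a suitable auxiliary function that multiplies $h$ by an exponential chosen to make the boundary bounds uniform, and then to exhibit an explicit extremal $h$ to get the converse.

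First I would introduce the comparison function $g(w) = \varepsilon^{w-1} h(w)$, where $\varepsilon^{w-1} := \exp((w-1)\log\varepsilon)$ with the real logarithm of $\varepsilon$. Since $|\varepsilon^{w-1}| = \varepsilon^{\Real w - 1}$, this is an entire, zero-free function whose modulus depends only on $\Real w$. On the right edge $\Real w = 1$ of $S$, the hypothesis $\|h\|_S \le 1$ gives $|g| \le 1$ in the limit; on the left edge $\Real w = 0$, we have $|g(u+iv)| = \varepsilon^{u-1}|h(u+iv)|$, and letting $u\downarrow 0$ together with the hypothesis on $\limsup |h|$ gives a boundary bound of $\varepsilon^{-1}\cdot \varepsilon = 1$. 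Moreover, inside $S$ the factor $|\varepsilon^{w-1}|$ lies between $1$ and $\varepsilon^{-1}$, so $g$ is bounded on $S$ because $h$ is.

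Next I would invoke the Phragm\'en--Lindel\"of principle for a strip: a bounded analytic function on $S$ whose boundary limits are bounded by $1$ satisfies $|g|\le 1$ throughout $S$. (If a self-contained argument is preferred, one multiplies $g$ by $\exp(\delta(w-\tfrac12)^2)$ for small $\delta>0$ to force decay as $|\Im w|\to\infty$, applies the ordinary maximum principle on a tall rectangle, and lets $\delta\downarrow 0$.) Substituting back and taking moduli yields $|h(w)| \le \varepsilon^{1 - \Real w}$, which is exactly~\eqref{lemmaeq}.

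For the converse, I would simply exhibit $h(w) = \varepsilon^{1-w}$. This is entire, satisfies $|h(w)| = \varepsilon^{1-\Real w}$, so $\|h\|_S \le 1$ and $\lim_{u\downarrow 0}\sup_v|h(u+iv)| = \varepsilon$, and the inequality in~\eqref{lemmaeq} holds with equality at every point of the open strip.

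The only delicate point is the Phragm\'en--Lindel\"of step, since $S$ is unbounded and we cannot cite the maximum modulus principle directly; the boundedness of $g$ (inherited from the boundedness of $h$, together with the bounded factor $|\varepsilon^{w-1}|$ on $S$) is what makes this step routine. The hypothesis is stated as a $\limsup$ only on the left edge, so one should also verify that the boundary bound on the right edge is obtained from $\|h\|_S\le 1$ by an interior-supremum limit; this is immediate but worth mentioning for rigor.
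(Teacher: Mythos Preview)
Your proof is correct and follows essentially the same route as the paper: multiply $h$ by an exponential $\varepsilon^{w-1}$ (the paper uses $\varepsilon^{w}$, differing only by a constant factor), invoke Phragm\'en--Lindel\"of on the resulting bounded function in the strip, and for the converse exhibit $h(w)=\varepsilon^{1-w}$, which is exactly the paper's extremal function $\varepsilon\kern .5pt e^{\nu w}$ rewritten. The paper additionally makes the reduction to the closed strip explicit via a $\delta$-shrinking argument, whereas you absorb this into the Phragm\'en--Lindel\"of step; both are standard and the arguments are otherwise identical.
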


\begin{proof}
We begin by noting that without loss of generality, we
may suppose that $h$ is analytic on the closed set
$\overline S$.  If not, we could restrict attention to a smaller
domain $\delta\le \Real w \le 1-\delta$ for $\delta<0$ and then
take the limit $\delta \to 0$.

Set $\nu = -\log\varepsilon$, implying $e^{-\nu} = \varepsilon$.
The function $e^{-\nu \kern .5pt w} h(w)$ is analytic in $\overline
S$ and bounded in absolute value by $\varepsilon$ for $\Real w =
0$ and also for $\Real w = 1$.  Therefore, by the maximum modulus
principle as qualified in the next paragraph, it is bounded
by~$\varepsilon$ for all $w\in \overline S$.  Thus $|h(w)| \le
\varepsilon\kern .5pt e^{\kern .7pt \nu \Res w} = e^{(1-\Res
w)\log \varepsilon}$, as required.

The qualification just mentioned is that the maximum modulus
principle does not apply to arbitrary functions on an unbounded
domain with a gap in the boundary at~$\infty$.  However,
this function is known to be bounded in an infinite strip,
and in such a situation, according to a Phragm\'en--Lindel\"of
theorem~\cite[Thm.~18.1.4]{hille}, the maximum modulus principle
applies after all.

For the converse, it is enough to consider the function
$h(w) = \varepsilon \kern .5pt e^{\kern .7pt \nu w}$.
\qed
\end{proof}

\section{Analytic continuation in a disk}
Figure~\ref{geom1} shows our first fundamental geometry, one that
has been considered by a number of authors.  We take $E$ to be the
closed unit disk and $\Omega$ as the open disk of radius $R>1$.
As always, our concern is obtaining bounds on $|\kern .3pt
g(z)|,$ $z \in \Omega\backslash E$, for an analytic function $g$
($= \ft - f\kern .7pt$) satisfying $\|\kern .3pt g\|_\Omega^{}
\le 1$ and $\|\kern .3pt g\|_E^{}\le \varepsilon$.  Here is the
result, essentially the {\em Hadamard three-circles theorem,} with
$\|\cdot\|_r^{}$ denoting the supremum norm over $\{z:\, |z|< r\}$.

\begin{figure}
\begin{center}
\vskip .6in
\includegraphics[scale=.6]{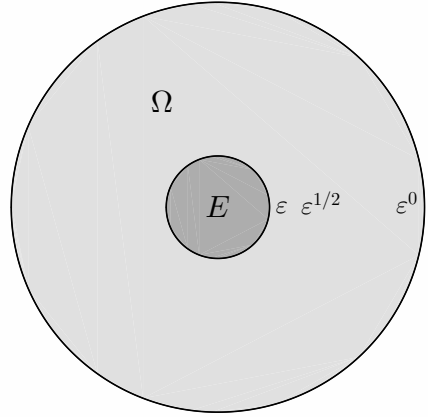}
\end{center}
\caption{\label{geom1}Analytic continuation of a function
$f$ from the unit disk\/ $E$ to a larger disk\/~$\Omega$ where it
is bounded.
If $f$ is known to $d$ digits on $E$, the number of digits 
determined in\/ $\Omega$ falls off smoothly to $0$ at the outer boundary.}
\end{figure}

\begin{theorem}
Given $R>1$, let $g$ be analytic in $\Omega = \{z\in\C: \,
|z|< R\}$ with $\|\kern
.3pt g\|_R^{}\le 1$ and
$\|\kern .3pt g\|_1^{}\le \varepsilon \in (0,1)$.  Then for
any\/ $z$ with $1< |z|< R$,
\begin{equation}
\log |\kern .3pt g(z)| \le \alpha(z) \log\varepsilon, \hbox{\quad i.e.,\quad}
|\kern .3pt g(z)| \le \varepsilon^{\alpha(z)},
\label{ineq1}
\end{equation}
where
\begin{equation}
\alpha(z) = 1 - {\log |z|\over \log R}.
\label{alfdef}
\end{equation}
Conversely, for an infinite sequence of 
values\/ $\varepsilon$ converging to\/ $0$, there are functions~$g$ satisfying
the given conditions for which the inequalities
$(\ref{ineq1})$ hold as equalities for all\/~$z$ with $1<|z|<R$.
\label{thm1}
\end{theorem}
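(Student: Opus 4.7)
The plan is to reduce the theorem to Lemma~\ref{thelemma} via the conformal exponential map. Let $S = \{w:0 < \Real w < 1\}$ and define $h(w) = g(e^{\kern .3pt w\log R})$. The map $w\mapsto e^{\kern .3pt w\log R}$ is entire, takes $S$ into the open disk $|z|<R$ (covering the annulus $1<|z|<R$ periodically), and takes the line $\Real w = 0$ into the unit circle. Thus $h$ is a single-valued analytic function on $S$, and composition with $g$ is well-defined there.

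Next I would check the hypotheses of the lemma. Since every $w\in S$ maps to a point with $|z| = R^{\Res w} < R$, we have $\|h\|_S^{}\le \|\kern .3pt g\|_R^{}\le 1$. For the boundary condition, for $u>0$ we can write
\begin{equation*}
\sup_v |h(u+iv)| \;=\; \sup_{|z|=R^u} |\kern .3pt g(z)|,
\end{equation*}
and as $u\downarrow 0$ this tends to $\sup_{|z|=1} |\kern .3pt g(z)| \le \varepsilon$ by continuity of $|\kern .3pt g|$ on the closed unit disk (which lies strictly inside the domain of analyticity). Lemma~\ref{thelemma} then yields $|h(w)|\le \varepsilon^{1-\Res w}$ throughout $S$. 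For any $z$ with $1<|z|<R$, choose any branch $w = (\log z)/\log R$, so that $\Real w = \log|z|/\log R$; then $1-\Real w = \alpha(z)$ and $h(w)=g(z)$, giving $(\ref{ineq1})$.

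For the converse, I would observe that Lemma~\ref{thelemma}'s extremal function $h(w)=\varepsilon\kern .3pt e^{\kern .3pt \nu w}$ pulls back through $z=e^{\kern .3pt w\log R}$ to a power $z^{\nu/\log R}$, which is single-valued on the disk only when the exponent is a nonnegative integer. I would therefore restrict to the sequence $\varepsilon_n = R^{-n}$, $n=1,2,3,\ldots$, and take $g_n(z) = R^{-n} z^n$. A direct check gives $\|\kern .3pt g_n\|_R^{} = 1$, $\|\kern .3pt g_n\|_1^{}=\varepsilon_n$, and
\begin{equation*}
|\kern .3pt g_n(z)| \;=\; R^{-n}|z|^n \;=\; R^{-n(1-\log|z|/\log R)} \;=\; \varepsilon_n^{\alpha(z)}
\end{equation*}
for all $z\ne 0$, which is equality in $(\ref{ineq1})$ for every $z$ with $1<|z|<R$. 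Since $\varepsilon_n\downarrow 0$, this completes the converse.

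The only subtle point is the justification of the boundary limit for $h$, since Lemma~\ref{thelemma} is phrased in terms of a $\limsup$ as $\Real w\downarrow 0$ rather than pointwise boundary values; this is handled cleanly by the maximum-modulus argument above applied on the circles $|z|=R^u$. Everything else is a straightforward transcription through the exponential map, so I do not anticipate any serious obstacle.
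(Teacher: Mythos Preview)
Your proposal is correct and follows essentially the same approach as the paper: transplant to the strip via $z = e^{w\log R}$, apply Lemma~\ref{thelemma}, and for the converse use $g(z)=(z/R)^n$ with $\varepsilon = R^{-n}$. The only cosmetic difference is that the paper first reduces to the case where $g$ is analytic on the closed disk $|z|\le R$, whereas you handle the boundary condition directly via continuity of $g$ on $|z|=1$; both are fine.
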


\begin{proof}
As in the proof of Lemma~\ref{thelemma}, we begin by noting that
without loss of generality, we may suppose that $g$ is analytic in
the closed domain $|z| \le R$.  We transplant the problem to
the infinite strip $\overline S$ of the last section by defining
$h(w) = g(z)$ for $z = R^{\kern .8pt w} = e^{w\log R}$, hence $
w = \log z / \log R$ (it doesn't matter which branch of $\log z$ is
used, as they all lead to the same estimate).
By the lemma, we have $\log|\kern
.3pt g(z)|\le (1- \Real w)\log \varepsilon = (1-\log|z|/\log
R)\log\varepsilon$, as claimed.

For the converse result, consider $g(z) = (z/R)^n$ with
$n = -\log\varepsilon / \log R$.  For an infinite sequence of values
$\varepsilon\to 0$, $n$ is an integer, and in these cases $g$ is an
analytic function with the required properties.
\qed
\end{proof}

In words, we can describe Theorem~\ref{thm1} as follows.  If $f$
is known to $d$ digits for $|z|\le 1$, the number of digits
determined for $|z|=r$ diminishes to $0$ as $r\to R\kern
1pt$; as a function of $\log r$, the loss of digits is linear.
At $r=\sqrt R$, for example, $f$ is determined to $d/2$ digits.

It is easy to outline an algorithm for analytic continuation,
based on a finite Taylor series of length $n \approx -\log\varepsilon/\log R$,
that achieves approximately the accuracy promised in
Theorem~\ref{thm1}.
From approximate values 
for $|z|=1$ with error at most $\varepsilon$ of a function~$f$
with $\|f\|_\Omega^{}\le \half$,
we compute approximations $\ct_k\approx c_k$ to the 
Taylor coefficients $\{c_k\}$ of $f$ for $0\le k \le n$
with $|\ct_k-c_k|\le \varepsilon$.
That this is possible follows from Cauchy's estimate applied
on the circle $|z|=1$; in practice,
we sample $f$ on a grid of $N\gg n$ roots of unity and
use the Fast Fourier Transform~\cite{akt}.  By Cauchy's
estimate applied now for $|z|=R$, the Taylor coefficients
of $f$ satisfy $|c_k| \le \half R^{-k}$.
If we define
\begin{equation}
\ft(z) = \sum_{k=0}^n \ct_k z^k,
\label{ftildeseries}
\end{equation}
then we have
$$
\ft(z)-f(z) = \sum_{k=0}^n (\ct_k-c_k) z^k - \sum_{k=n+1}^\infty
c_kz^k,
$$
implying
$$
|\ft(z)-f(z)| \le \sum_{k=0}^n \varepsilon |z|^k + \half\sum_{k=n+1}^\infty
(|z|/R)^k.
$$
Our choice $n \approx -\log\varepsilon/\log R$ implies 
$\varepsilon\approx R^{-n}$, and thus
these two sums are both of size on the order of $(|z|/R)^n$.
We therefore achieve, as required,
$$
|\ft(z)-f(z)| \approx (R/|z|)^{-n} = R^{-\alpha(z)n} \approx
\varepsilon^{\alpha(z)},
$$
with the equality in the middle holding
since the definition (\ref{alfdef}) implies $R^{\kern .8pt\alpha(z)}
= R/|z|$.

In the algorithm just described, the regularization occurred
when we took the series (\ref{ftildeseries}) to be finite rather
than infinite.
In this geometry, the ill-posedness of analytic
continuation resides in the fact that as $k\to\infty$, powers $z^k$
have unbounded discrepancies of absolute value between one radius
$|z|$ and another.  For a fascinating analysis of the implications
of such behavior in the context of computation of
Taylor coefficients, see~\cite{born}.
The importance of truncating a series for numerical analytic
continuation was
recognized at least as early as~\cite{lewis}, and
a detailed error analysis of an algorithm with this flavor can
be found in~\cite{franklin}.

We have described the algorithm as a process for working with
a function known to accuracy $\varepsilon$ on the unit disk.
One way to obtain such data is to take Taylor polynomials of $f$
of successively higher degrees (in exact arithmetic), in which
case (\ref{ineq1}) amounts to the statement that the convergence
of Taylor polynomials to $f(z)$ for $|z|<R$ is exponential at a
rate of order $(|z|/R)^n$.

\section{Analytic continuation in a half-strip}
Our second fundamental geometry is shown in Figure~\ref{geom2}.
Now $E$ is the complex interval $(-i,i\kern .7pt )$ and $\Omega$ is the
half-strip~$H$ of points $z=x+iy$ with $x> 0$, $-1<y<1$.
Our aim is to analytically continue a function $f$ from the end
segment of the strip to real positive values $x$.

\begin{figure}
\begin{center}
\vskip .8in
\includegraphics[scale=.6]{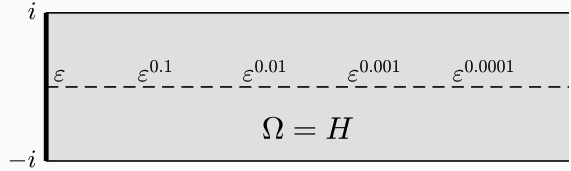}
\end{center}
\caption{\label{geom2}Analytic continuation of a bounded function
$f$ along the centerline of an infinite half-strip $H$ of
half-width $1$. The number of digits
determined falls off exponentially with distance from the data,
reducing by a factor of\/ $10$ each time\/ $x$ increases by
$(2/\pi)\log 10 \approx 1.47$.}
\end{figure}

To apply Lemma~\ref{thelemma}
in this geometry, we need to map $H$ conformally to the infinite
strip $S$ of Section 2 in such a way that the corner points
$z = \pm \kern .3pt i$ map to the infinite vertices $w = \pm \kern
.3pt i\kern .5pt
\infty$ and
$z=\infty$ maps to $w=1$.
We can construct such a map by composing three simpler maps.
First, $u = \sinh(\pi z/2)$ maps $H$ to the right half-plane with
distinguished points $\pm i$ and~$\infty$.
Next, $v = {(i-u)/(i+u)}$ maps the right half-plane to the
upper half-plane with distinguished points $0$, $\infty$, and $-1$.
Finally, $w = (-i/\pi)\log v$ maps the upper half-plane to
$S$ as required.
Combining these steps, we find that
the map from the half-strip $H$ in the $z$-plane to the
infinite strip $S$ in the $w$-plane is given by
\begin{equation}
\label{map}
w = -{i\over \pi} \log\left( {i - \sinh(\pi z/2)\over i + \sinh(\pi
z/2)} \right) =
{2\kern .3pt i\over \pi}\log\left( {1-i\kern .5pt\exp(\pi z/2)\over
-i + \exp(\pi z/2)} \right).
\end{equation}

The properties of this map are such as to reveal that analytic
continuation into a half-strip where a function is known to
be bounded, while possible in principle, is so ill-conditioned as to 
be generally infeasible in practice.  Instead of digits
of accuracy being lost linearly, they are lost exponentially,
as emphasized in Figure~\ref{geom2}.
To see how this comes about,
consider the situation in which $z$ is a real number
$x>0$.  The leading
terms in the asymptotics for $u$, $v$, and $w$ for large $x$
give us
\begin{equation}
u \sim \half e^{\pi x / 2}, \quad
v \sim -1 + 4\kern .3pt i\kern .5pt e^{-\pi x/2}, \quad
w \sim 1 - {4\over \pi} \kern .7pt e^{-\pi x/2}.
\label{asymp1}
\end{equation}
Since $w$ is exponentially close to $1$, Lemma~\ref{thelemma}
implies that the number of digits of accuracy will
be multiplied by an exponentially small factor
${\sim}\kern 1pt (4/\pi) \exp(-\pi x /2)$. 
A more careful analysis sharpens (\ref{asymp1}) to
\begin{equation}
{4\over \pi} \kern .7pt e^{-\pi x/2} - \left({4\over \pi} - 1\right)
e^{-\pi x} \,\le\,
1- w \,\le\,  {4\over \pi} \kern .7pt e^{-\pi x/2}
\quad (x \ge 0),
\label{asymp2}
\end{equation}
from which we get the following theorem:

\begin{theorem}
Let $g$ be analytic in the half-strip $H$ with $\|\kern .3pt
g\|_H^{}\le 1$ and
$\|\kern .3pt g\|_E^{}\le \varepsilon$ 
for some\/ $\varepsilon \in(0,1)$.  Then for
any\/ $x > 0$,
\begin{equation}
\log |\kern .3pt g(x)| \le \alpha(x) \log\varepsilon, \hbox{\quad i.e.,\quad}
|\kern .3pt g(x)| \le \varepsilon^{\alpha(x)},
\label{ineq2}
\end{equation}
where
\begin{equation}
\alpha(x) = {4\over \pi} \kern .7pt e^{-\pi x/2}
- \left({4\over \pi} - 1\right) e^{-\pi x}.
\label{alfdef2}
\end{equation}
Conversely, for any $\varepsilon\in (0,1)$, there is a function
$g$ satisfying the given conditions for which,
for all $x\ge 0$,
\begin{equation}
\log |\kern .3pt g(x)| \ge \beta(x) \log\varepsilon, \hbox{\quad i.e.,\quad}
|\kern .3pt g(x)| \ge \varepsilon^{\beta(x)},
\label{conv}
\end{equation}
with
\begin{equation}
\beta(x) = {4\over \pi} e^{-\pi x/2}.
\label{betadef}
\end{equation}
\label{thm2}
\end{theorem}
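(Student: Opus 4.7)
The plan is to transplant the problem from the half-strip $H$ to the infinite strip $S$ of Section~2 via the conformal map~(\ref{map}), apply Lemma~\ref{thelemma} to the transplanted function, and then push the resulting inequality back through the map using the two-sided bound~(\ref{asymp2}).

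First I set $h(w)=g(z(w))$, where $z(w)$ is the inverse of the map~(\ref{map}); analyticity of $h$ on $S$ and the bound $\|h\|_S^{}\le 1$ are inherited from the corresponding properties of $g$ on $H$. To identify the boundary behaviour, I trace the segment $E=(-i,i)$ through the three components of the map: $u=\sinh(\pi z/2)$ sends $E$ to the imaginary interval $(-i,i)$ in the $u$-plane, then $v=(i-u)/(i+u)$ sends this to the positive real axis, and finally $w=(-i/\pi)\log v$ maps the positive real axis to the imaginary axis $\Real w=0$. Hence the hypothesis $\|g\|_E^{}\le\varepsilon$ translates into $\lim_{u\downarrow 0}\sup_v|h(u+iv)|\le\varepsilon$, so Lemma~\ref{thelemma} applies and gives $|h(w)|\le\varepsilon^{\kern .3pt 1-\Res w}$ for all $w\in S$.

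For a real point $x>0$ in $H$ the same chase shows $w(x)$ is real, with explicit value $1-w(x)=(2/\pi)\arctan\bigl(1/\sinh(\pi x/2)\bigr)$. Combined with the lower bound $1-w(x)\ge\alpha(x)$ from~(\ref{asymp2}) and the sign $\log\varepsilon<0$, this gives $\log|g(x)|\le\alpha(x)\log\varepsilon$, which is~(\ref{ineq2}). For the converse, I transplant the extremal function of Lemma~\ref{thelemma}, namely $h(w)=\varepsilon\kern .5pt e^{\kern .3pt\nu w}$ with $\nu=-\log\varepsilon$, back to $H$ via $g(z)=h(w(z))$; this $g$ is analytic on $H$, satisfies $\|g\|_H^{}\le 1$ and $|g|\equiv\varepsilon$ on $E$, and obeys the exact equality $|g(x)|=\varepsilon^{\kern .3pt 1-w(x)}$ for real $x>0$. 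The upper bound $1-w(x)\le\beta(x)$ from~(\ref{asymp2}) then yields $\log|g(x)|\ge\beta(x)\log\varepsilon$, which is~(\ref{conv}).

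The application of Lemma~\ref{thelemma} itself is essentially bookkeeping along the conformal map; the main technical obstacle is the two-sided estimate~(\ref{asymp2}) for $1-w(x)$. The upper bound $(4/\pi)e^{-\pi x/2}$ follows quickly from $\arctan t\le t$, but the matching lower bound has to remain valid uniformly on the whole half-line $x\ge 0$, so it requires controlling the second-order correction $-(4/\pi-1)e^{-\pi x}$; I would obtain it by combining $\arctan t\ge t-t^3/3$ with the identity $1/\sinh(\pi x/2)=2e^{-\pi x/2}/(1-e^{-\pi x})$ and checking the resulting inequality at $x=0$ to pin down the constant.
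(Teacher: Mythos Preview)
Your proof is correct and follows exactly the approach the paper takes: transplant to the strip $S$ via the conformal map~(\ref{map}), invoke Lemma~\ref{thelemma}, and read off the result through the two-sided estimate~(\ref{asymp2}). You have simply filled in the details the paper leaves implicit---tracing $E$ to $\Real w=0$, writing $1-w(x)=(2/\pi)\arctan\bigl(1/\sinh(\pi x/2)\bigr)$ explicitly, and sketching how~(\ref{asymp2}) itself is verified---so nothing further is needed.
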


\begin{proof}
This follows from Lemma~\ref{thelemma} by the conformal
transplantation (\ref{map}), using the bounds (\ref{asymp2}).
\qed
\end{proof}

\section{General geometries}
For more general geometries than the disk or the half-strip,
let us now justify the claims made in the introduction.  
First is the ill-posedness statement of the opening paragraph, which
as usual we formulate as an assertion about an analytic
function $g = \ft - f$.

\begin{theorem}
\label{genthm1}
Let $\Omega$ be a connected open region of the complex plane
$\C$ and let~$E$ be a bounded nonempty continuum in $\overline\Omega$
whose closure $\overline E$ does not enclose any points of\/ $\Omega\backslash
\overline E$.
Let $g$ be an analytic function in\/ $\Omega\cup E$ satisfying
$\|\kern .3pt g\|_E^{} \le \varepsilon$ for some $\varepsilon> 0$.
This condition implies
no bounds whatsoever on the value of\/~$g$ at any point $z\in
\Omega\backslash \overline E$.  
\end{theorem}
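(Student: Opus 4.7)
The plan is to show, for any fixed $z_0 \in \Omega\backslash \overline E$ and any prescribed $M>0$, how to build an analytic function $g$ on a neighborhood of $\Omega\cup E$ satisfying $\|g\|_E^{}\le\varepsilon$ yet $|g(z_0)|\ge M$. The tool is Runge's theorem on rational approximation.

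First, I would unpack the topological content of the enclosure hypothesis: every bounded component of $\C\backslash\overline E$ (every ``hole'' of $\overline E$) is disjoint from $\Omega$. In particular $z_0$, which lies in $\Omega$, must sit in the unbounded component of $\C\backslash\overline E$.

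Next, set $K=\overline E\cup\{z_0\}$ and define an auxiliary function $\phi$, holomorphic on the disjoint union of a small neighborhood of $\overline E$ and a small disk around $z_0$, by $\phi\equiv 0$ near $\overline E$ and $\phi\equiv M+1$ near $z_0$. Since removing a single point from a connected open planar set leaves it connected, the bounded components of $\C\backslash K$ are exactly those of $\C\backslash\overline E$, and by the previous step they all lie outside $\Omega\cup E$. Now I would invoke Runge's theorem: choose one base point from each such bounded component and obtain a rational function $r$ whose poles are confined to these base points and which approximates $\phi$ to within $1$ uniformly on $K$. Then $\|r\|_E^{}\le 1$, $|r(z_0)|\ge M$, and $r$ is analytic on an open set containing $\Omega\cup E$. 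Setting $g=\varepsilon\kern .3pt r$ finishes the construction, since $\|g\|_E^{}\le\varepsilon$ while $|g(z_0)|\ge\varepsilon M$, and $M$ was arbitrary.

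The main obstacle is guaranteeing that the Runge approximant can be chosen without introducing any poles inside $\Omega$, and this is precisely where the hypothesis that $\overline E$ encloses no points of $\Omega\backslash\overline E$ is used. A boundary case worth noting is that if $\C\backslash\overline E$ happens to be connected, Runge delivers a polynomial approximant outright and the pole-placement concern disappears entirely.
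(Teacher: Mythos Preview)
Your proof is correct and follows essentially the same Runge-theorem approach as the paper. The only difference is cosmetic: the paper first fills in the holes of $\overline E$ (passing to $\tilde E = \overline E$ together with all enclosed points, so that $\C\backslash\tilde E$ is connected) and thereby obtains a polynomial approximant directly---your ``boundary case''---and then adds a constant to hit the target value exactly at $z$; you instead allow rational poles in the holes and settle for $|g(z_0)|\ge \varepsilon M$ with $M$ arbitrary.
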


\begin{proof}
Given $z\in \Omega\backslash\overline E$ and a complex
number $M$, we shall show there is a polynomial $p$ such
that $\|\kern .5pt p\|_E^{}\le\varepsilon$ and $p(z) = M$.
Let $\Et$ denote the
compact set consisting of $\overline E$ together
with all points enclosed by this set; thus the complement of
$\Et$ in the complex plane is connected.
By assumption, $z \not\in \Et$.
According to Runge's theorem~\cite[Thm.~13.7]{rudin},
there is a 
polynomial $q$ such that $\|\kern .3pt q\|_{\Et}^{} \le \varepsilon/2$ 
and $|\kern .3pt q(z)-M| \le \varepsilon/2$.
Now define $p(\zeta) = q(\zeta) + M - q(z)$.
We readily verify $p(z) = M$ and $|\kern .5pt p(\zeta)|
\le |\kern .3pt q(\zeta)| + |M-q(z)| \le
\varepsilon$ for all $\zeta\in E$.
\qed
\end{proof}

The well-posedness statement of the introduction is (\ref{bound1}),
which we formulate as follows.

\begin{theorem}
\label{genthm2}
Let $\Omega$, $E$, $\varepsilon$, and $g$ be as in
Theorem~$\ref{genthm1}$,
but now with $g$ additionally satisfying $\|g\|_\Omega^{}\le 1$,
and let\/ $z$ be a point in $\Omega\backslash \overline E$.
Assume that the boundary of $E$ is piecewise smooth (a finite
union of smooth Jordan arcs).
Then there is a number\/ $\alpha\in (0,1)$, independent
of\/ $g$ though not of~$z$, such that for all\/ $\varepsilon>0$, 
\begin{equation}
|\kern .3pt g(z)| \le \varepsilon^{\alpha(z)}.
\label{genthm2eq}
\end{equation}
\end{theorem}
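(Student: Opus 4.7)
The plan is to derive the bound from the classical two-constants theorem, which is the potential-theoretic generalization of Lemma~\ref{thelemma} from a strip to an arbitrary plane domain. I would work in the open set $D = \Omega \setminus \overline E$, whose boundary splits into a (piecewise smooth) portion on $\partial E$ and a remaining portion on $\partial \Omega$. By hypothesis $z\in D$, $g$ is analytic on $D\cup E$, with $|g|\le \varepsilon$ on $E$ and $|g|\le 1$ throughout $\Omega$.

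The first step is to construct the harmonic measure $\omega(\zeta)$: the bounded harmonic function on $D$ whose boundary values are $1$ on $\partial D \cap \overline E$ and $0$ on the remainder of $\partial D$ (including the point at infinity, if $D$ is unbounded). The piecewise smoothness of $\partial E$, together with the hypothesis that $\overline E$ does not enclose points of $\Omega\setminus \overline E$, is enough to make this Dirichlet problem well-posed. The strong maximum principle then forces $0<\omega(\zeta)<1$ at every interior point, so $\alpha(z) := \omega(z)\in(0,1)$ is the exponent we want, and it depends only on $\Omega$, $E$, and $z$.

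The second step is the two-constants argument itself. Since $g$ is analytic on $D$, the function $u(\zeta)=\log|g(\zeta)|$ is subharmonic there (with value $-\infty$ allowed at zeros of $g$), bounded above by $\log\varepsilon$ on $\partial D \cap \overline E$ and by $0$ on the rest of $\partial D$. Consequently $u - \omega\log\varepsilon$ is subharmonic with boundary values $\le 0$ throughout $\partial D$. Invoking the maximum principle—strengthened by a Phragm\'en--Lindel\"of argument at infinity in exactly the manner used in the proof of Lemma~\ref{thelemma}—yields $u-\omega\log\varepsilon \le 0$ on $D$. Evaluating at $z$ gives $\log|g(z)|\le \alpha(z)\log\varepsilon$, which is (\ref{genthm2eq}).

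The main obstacle is purely technical: one has to justify that the harmonic measure $\omega$ is well defined and does not degenerate to $0$ at $z$ (which would collapse $\alpha(z)$ out of $(0,1)$), and that the subharmonic maximum principle still works when $D$ is unbounded, multiply connected, or has irregular pieces of $\partial\Omega$. Both issues are precisely what the piecewise smoothness of $\partial E$, the enclosure hypothesis on $\overline E$, and the Phragm\'en--Lindel\"of machinery already cited in Lemma~\ref{thelemma} are designed to rule out; no genuinely new complex-analytic input is needed beyond what has appeared in the preceding sections.
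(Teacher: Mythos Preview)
Your argument is correct in spirit and is the classical two-constants/harmonic-measure route; it differs genuinely from the paper's proof. The paper does not invoke harmonic measure at all. Instead it stays entirely within the machinery already built: it chooses a smooth arc $\Gamma\subset\Omega$ joining a boundary point of $E$ to $z$, thickens $\Gamma$ to a thin simply connected tube $J\subset\Omega$ whose boundary contains a piece of $\partial E$, conformally maps $J$ onto the half-strip of Section~4, and then reads off an exponent $\alpha(z)$ from Theorem~\ref{thm2}. The paper even remarks that the resulting $\alpha$ is ``typically very pessimistic.''

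What each approach buys: your harmonic-measure argument yields the \emph{sharp} exponent $\alpha(z)=\omega(z,\partial E,D)$ and makes transparent why $\alpha$ depends only on the geometry, at the cost of importing potential-theoretic facts (existence of harmonic measure on a possibly irregular, possibly multiply connected, possibly unbounded $D$, and a Phragm\'en--Lindel\"of principle there) that the paper has not developed. The paper's tube-and-transplant proof is weaker quantitatively but is self-contained---it uses nothing beyond Theorem~\ref{thm2} and the Riemann mapping theorem---and sidesteps every regularity question about $\partial\Omega$ by working inside a compactly contained simply connected subdomain. One small point to tidy in your version: the strict inequality $\omega(z)<1$ requires $\partial D\setminus\overline E$ to be non-polar, which fails when $\overline\Omega=\C$; in that Liouville case $g$ is constant and any $\alpha\in(0,1)$ works trivially, so the conclusion survives, but your sentence ``the strong maximum principle then forces $0<\omega(\zeta)<1$'' needs that caveat.
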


\begin{proof}
Let $\Gamma$ be a smooth open arc in $\Omega$ connecting
a point $e$ in the boundary of $E$, which will
necessarily belong to $\overline \Omega$, to $z$.  For a sufficiently
small $\delta>0$, the intersection of the open $\delta$-neighborhood
of $\Gamma$ with $\Omega$ is a simply-connected region $J$
in $\Omega$ with $z$ in its interior and a piecewise smooth
boundary, 
a portion of $E$ making up part of this boundary.  By a conformal
map, we may transplant $J$ to the domain of Figure~\ref{geom2},
whereupon Theorem~\ref{thm2} provides a suitable (if
typically very pessimistic) value of $\alpha$.
\qed
\end{proof}

Theorem~\ref{genthm2} asserts
that analytic continuation in the presence of a boundedness
condition is a well-posed problem in the sense
that there is a unique solution depending continuously
on the data, but this does not mean that its condition
number is finite.  A finite condition number would correspond
to $|\kern .3pt g(z)|$ shrinking linearly with~$\varepsilon$, that is,
to a value $\alpha = 1$ in (\ref{genthm2eq}), or more
generally to the bound
\begin{equation}
|g(z)| \le C\kern .3pt\varepsilon
\label{moregen}
\end{equation}
as $\varepsilon\to 0$ for some constant $C$ depending on $z$ but
not $g$.
But Theorem~\ref{genthm2} only gives
$\alpha < 1$, and in fact, we now show that $\alpha = 1$ cannot occur except
in trivial cases with $\overline \Omega = \C$.

\begin{theorem}
\label{genthm3}
Under the circumstances of Theorem~$\ref{genthm2}$, 
a bound of the form $(\ref{moregen})$ can never hold unless
$\overline\Omega$ is the whole complex plane $\C$.
\end{theorem}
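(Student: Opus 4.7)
My plan is to prove the contrapositive via a Walsh--Runge approximation argument: assuming $\overline{\Omega}\ne\C$, I would derive a contradiction to $(\ref{moregen})$.

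The first step is to sharpen $(\ref{moregen})$ into a much cleaner statement about bounded analytic functions. By applying $(\ref{moregen})$ to $p^n/\|p\|_\Omega^n$ for an arbitrary $p\in H^\infty(\Omega)$ (which is admissible, with $\varepsilon=(\|p\|_E^{}/\|p\|_\Omega^{})^n$ tending to $0$ as $n\to\infty$ whenever $\|p\|_E^{}<\|p\|_\Omega^{}$), and then taking $n$-th roots, the constant $C^{1/n}$ disappears in the limit and the bound reduces to
$|p(z)|\le\|p\|_E^{}$ for every $p\in H^\infty(\Omega)$.

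The second step is to specialize $p$ to rational functions with all poles in $\C\setminus\overline{\Omega}$, which lie in $H^\infty(\Omega)$ since the pole set is at positive distance from $\overline{\Omega}$. By the Walsh--Runge approximation theorem, the inequality $|r(z)|\le\|r\|_E^{}$ for every such $r$ forces $z$ to lie in the rational hull of $\overline E$ with respect to the admissible pole set $\C\setminus\overline{\Omega}$; by Runge, this hull equals $\overline E$ together with those bounded components of $\C\setminus\overline E$ that are disjoint from $\C\setminus\overline{\Omega}$, i.e.\ contained in $\overline{\Omega}$. Now $z\in\Omega\setminus\overline E$, so $z\notin\overline E$; and by the non-enclosing hypothesis on $E$, the point $z$ lies in the unbounded component of $\C\setminus\overline E$, not in any bounded one. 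Hence for $z$ to be in the hull, the unbounded component of $\C\setminus\overline E$ must be contained in $\overline{\Omega}$. Combined with $\overline E\subset\overline{\Omega}$, this leaves at most the bounded ``holes'' of $\overline E$ outside $\overline{\Omega}$, and in the generic situation these holes are empty (as when $\overline E$ is simply connected, which the piecewise-smooth boundary assumption makes natural), forcing $\overline{\Omega}=\C$ and contradicting the hypothesis.

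The step I expect to be most delicate is verifying the Walsh--Runge hypothesis itself---that $\C\setminus\overline{\Omega}$ meets each component of $\hat\C\setminus(\overline E\cup\{z\})$, in particular the unbounded component. When $\Omega$ is bounded this is automatic, since then $\C\setminus\overline{\Omega}$ contains a neighborhood of $\infty$; in the unbounded case one needs a short topological argument that some point of the (nonempty) open set $\C\setminus\overline{\Omega}$ lies in the unbounded component of $\C\setminus\overline E$, which is where the full strength of ``$\overline E$ does not enclose $\Omega\setminus\overline E$'' enters. Any bounded components of $\C\setminus\overline E$ require a parallel treatment, and handling them carefully in the characterization of the rational hull is the main technical hurdle.
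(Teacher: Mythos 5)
Your Step~1 is correct and is genuinely different from the paper's argument: by applying (\ref{moregen}) to $p^n/\|p\|_\Omega^n$ and letting $n\to\infty$ you eliminate the constant $C$ and reduce the whole theorem to exhibiting a \emph{single} function, analytic and bounded on $\Omega$ and analytic on $E$, with $|p(z)|>\|p\|_E^{}$. The paper instead builds in the correct $\varepsilon$-dependence by hand, constructing the family $g(\zeta)=\exp[\log(\varepsilon)(-q(\zeta))/(1+M)]$ from one Runge polynomial $q$ after transplanting by $1/(\zeta-z_0^{})$ with $z_0^{}\notin\overline\Omega$ so that $\Omega$ becomes bounded; your power trick makes that exponential rescaling unnecessary. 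Note also that rational functions bounded on $\Omega$ with all poles at such a $z_0^{}$ are exactly polynomials in $1/(\zeta-z_0^{})$, so your Step~2 and the paper's transplant-then-Runge step are essentially the same device.

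The genuine gap is in Step~2, and you have flagged it without closing it: you never actually produce the separating rational function, and your argument terminates with ``in the generic situation these holes are empty,'' which does not prove the stated theorem. Concretely, what is needed (by you and by the paper alike) is a point $z_0^{}\in\C\setminus\overline\Omega$ lying in the same component of $(\C\cup\{\infty\})\setminus\overline E$ as $z$ --- for you so that the Walsh--Runge hypothesis is met, for the paper so that after the map $1/(\zeta-z_0^{})$ the point $z$ lies outside the hull $\Et$ and Runge's theorem can separate it from $E$. The non-enclosing hypothesis does not supply this: it constrains $\Omega\setminus\overline E$, not $\C\setminus\overline\Omega$. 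Indeed, for $\Omega=\{|z|>1\}$ and $E=\{1\le|z|\le 2\}$ every hypothesis of Theorem~\ref{genthm2} holds and $\overline\Omega\ne\C$, yet every admissible $g$ extends analytically to $\infty$ and the maximum principle on $\{|z|\ge 2\}\cup\{\infty\}$ gives $|g(z)|\le\varepsilon$ for $|z|>2$; so no separating function exists and (\ref{moregen}) holds with $C=1$. The obstruction you ran into is therefore real and cannot be argued away; it can only be excluded by an additional hypothesis (for instance that $\C\setminus\overline\Omega$ meets the unbounded component of $\C\setminus\overline E$, or that $\overline E$ has connected complement), and the paper's own proof passes over the same point silently at its Runge step. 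Your proposal is incomplete exactly where the difficulty lies, so it does not constitute a proof.
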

\begin{proof}
If $\overline\Omega$ is all of $\C$, we may be in the trivial
situation mentioned in the introduction, where Liouville's theorem
implies that $g$ is constant.  (This will be true, for example,
if $\Omega$ consists of $\C$ with a finite set of points removed.
It won't be true if~$\Omega$ consists of $\C$ with some arcs
removed.)
On the other hand suppose there
is a point $z_0^{}\in\C$ disjoint from $\overline\Omega$.  Then
there is a closed disk $\Delta$ about~$z_0^{}$ that is disjoint
from $\overline\Omega$.
By the conformal map $1/(z-z_0^{})$, we may transplant the problem
so that $\Omega$ and $E$ are bounded.
For any $z \in \Omega\backslash\overline E$,
as in the proof of Theorem~\ref{genthm1}, Runge's theorem
ensures that there is a polynomial $p$ such that
$p(z) = 0$ and $\Real p(\zeta) \le -1$ for all $\zeta\in E$.
Choose $M>0$ such that $\Real p(\zeta) \le M$ for all $\zeta\in \Omega$.
Then $q(z) = p(z)-M$ has real parts $\le -(1+M)$ for $\zeta\in E$,
$-M$ at $z$, and $\le 0$ for $\zeta\in \Omega$.
Given $\varepsilon\in (0,1)$, define
$g(\zeta) = \exp[\kern 1pt\log(\varepsilon)(-q(\zeta))/(1+M)]$.
Then $\|\kern .3pt g\|_\Omega^{}\le 1$
and $\|\kern .3pt g\|_E^{} \le \varepsilon$, but
$|\kern .3pt g(z)| = \varepsilon^{M/(M+1)}$.
This contradicts (\ref{moregen}).
\qed
\end{proof}

Together, Theorems~\ref{genthm2} and \ref{genthm3} assert that
analytic continuation with a boundedness condition in a nontrivial
region is always well-posed but always has an infinite condition
number.  I am not aware if such a general assertion has been
made before.

\section{Analytic continuation in Chebfun}
This project sprang from work with Chebfun, a software system for
numerical computing with functions~\cite{chebfun}.  In its basic
mode of operation, Chebfun works with smooth functions
on an interval that without loss of generality we may take to
be $[-1,1]$.  Chebfun represents each function to ${\approx}\kern
1pt 16$ digit precision by a polynomial in the form of a finite
Chebyshev series, and early in the project, it was realized that
the same series could be used for evaluation at complex points
off the interval.

\begin{figure}
\begin{center}
\smallskip
\vskip .2in
\includegraphics[scale=.7]{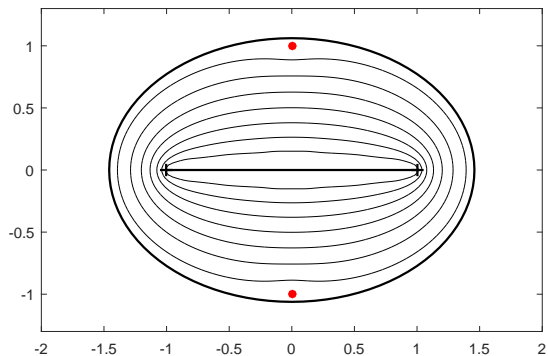}
\end{center}
\caption{\label{chebfig} Chebfun implicitly carries out analytic
continuation within a Bernstein ellipse bounded approximately
by the nearest complex
singularity of a function\/ $f$ defined on a real interval.
For the function $f(x) = \log(1+x^2)$ defined on $[-1,1]$,
the thick outer curve shows the ``Chebfun ellipse'' estimate of the
region of analyticity plotted by \kern 1pt{\tt plotregion} and the
inner curves are contour lines corresponding to errors
$|f(x) - p(x)| = 10^{-2}, 10^{-4}, \dots, 10^{-14}$ (from outside
in), where $p$ is the Chebfun approximation, a polynomial of
degree $38$.  The dots mark the actual branch points of~$f$.}
\end{figure}

Figure~\ref{chebfig} illustrates this effect for the function $f(x)
= \log(1+x^2)$, which has branch points at $x = \pm\kern .5pt  i$.
By an adaptive process described in~\cite{chopping}, the Chebfun
command \verb|p = chebfun('log(1+x^2)')| constructs a polynomial
$p$ that matches $f$ on $[-1,1]$ with a maximal error of about
$2^{-51} \approx 4.44\times 10^{-16}$; the degree of $p$ for this
example is 38.\ \ Typing {\tt p(0)} to evaluate the polynomial
at $x=0$, for example, returns the value $5.5\times 10^{-17}$,
accurate to more than 16 digits.  What is interesting is that
typing {\tt p(i/2)} also gives an accurate value: $-0.2876820630$,
as compared with the true value $\log(0.75) \approx -0.2876820768$.
How can we explain this?

In fact, Chebfun is carrying out the Chebyshev analogue of the
algorithm of Section~3: it computes a finite sequence of Chebyshev
series coefficients, then uses these coefficients to define an
approximation $p = \ft$.  Instead of working outward from the
unit disk $E$ to larger disks, it is working outward from the
unit interval $E$ to so-called Bernstein ellipses, whose algebra
is defined by a transplantation of the results of Section~3 by
the Joukowski map $(z+z^{-1})/2$.  For details, see~\cite{atap},
particularly the discussions of the ``Chebfun ellipse'' and the
command {\tt plotregion}.
According to Theorem~\ref{thm1} as transplanted from disks to
ellipses, we can expect the number of accurate digits to fall
off smoothly, and Figure~\ref{chebfig} shows that this is just
what is observed.

Analytic continuation in a region bounded by an ellipse is
mentioned as Example~3 of~\cite{franklin}, and a detailed analysis
of algorithms in this geometry is presented in~\cite{demtow}.
Results for ellipses analogous to those of~\cite{born} for disks
can be found in~\cite{wang}.

\section{Analytic continuation by a chain of disks}

The classic idea for analytic continuation, going back to
Weierstrass, involves a succession of Taylor expansions,
each with its own disk of convergence.  In principle, this
procedure enables one to track a function along any path where
it is analytic, and we recommend the beautifully illustrated
discussion in Section~3.6 of Wegert's {\em Visual Complex
Functions}~\cite{wegert}.  For inexact function data, however,
the method is far from promising.  There is a small literature
on numerical realizations, and a memorable contribution is a 1966
paper by Henrici in which the necessary transformations of series
coefficients are formulated in terms of matrix multiplications;
see~\cite{hen66} or~\cite[sec.~3.6]{henrici}.

To analyze this idea quantitatively, the simplest setting is
a channel, essentially the same as the
half-strip of Section~4.  Specifically,
consider the finite-length ``stadium'' $G$ shown in 
Figure~\ref{stadium}.  Given $L>0$, this is the strip
of half-width $1$ extending from $x=0$ to $x=L$ together with
half-disks of radius~1 at each end.  For some $n>0$ and
$r\in (0,1),$ we define $h=L/n$ and
\begin{equation}
\xk = kh, ~~\Dk = \{z: \, |z-x_k|< 1\}, ~~
\Dkr = \{z: \, |z-x_k|< r\}
\label{kdefs}
\end{equation}
for $0 \le k \le n$.
We assume $n$ is large enough so that $h < 1-r$.

\begin{theorem}
With the definitions of the last paragraph,
let $f$ be analytic in\/ $G$ with $\|f\|_G^{}\le 1$
and let $\fk$ be analytic in $D_k^{}$
with $\|\fk\|_{D_k^{}}^{}\le 1$, $0\le k \le n-1$.
Assume $r\le 1/2$ and $h\le 1/4$.
Given $\varepsilon\in (0,1)$, define
\begin{equation}
\vark = \varepsilon^{\kern .5pt \exp(-\et\kern .8pt \xk)}, \quad
\et = {1+2h\over r\log(1/r)}.
\label{vardef}
\end{equation}
If
\begin{equation}
\|f_0^{}-f\|_{D_0^{(r)}} \le \varepsilon_0^{}
\label{basecase}
\end{equation}
and
\begin{equation}
\|\fk-f_{k-1}^{}\|_{D_k^{(r)}} \le \vark,  \quad 1\le k \le n-1,
\label{ineqs}
\end{equation}
then for all sufficiently small choices of $\varepsilon$, 
\begin{equation}
\|f_{k-1}^{}- f\|_{D_k^{(r)}}^{} \le \vark, \quad 1 \le k \le n.
\label{induc2}
\end{equation}
\label{chainthm}
\end{theorem}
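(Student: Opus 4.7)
The plan is to establish (\ref{induc2}) by induction on $k$ from $1$ to $n$, with the Hadamard three-circles theorem (Theorem~\ref{thm1}), applied in the disk $\Dk$, as the engine that propagates the error from $\Dkr$ to $D_{k+1}^{(r)}$. The key geometric observation is $D_{k+1}^{(r)}\subset\{z:|z-\xk|\le r+h\}$, and the assumptions $r\le 1/2$ and $h\le 1/4$ ensure $r+h<1$, so the intermediate radius lies strictly inside $\Dk$.

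For the base case $k=1$, set $g=f_0^{}-f$. Then $\|g\|_{D_0^{}}\le 2$ (since $D_0^{}\subset G$, $\|f\|_G\le 1$, and $\|f_0^{}\|_{D_0^{}}\le 1$), and $\|g\|_{D_0^{(r)}}\le\varepsilon_0^{}=\varepsilon$ by (\ref{basecase}). Three-circles applied with inner radius $r$, outer radius $1$, and intermediate radius $r+h$ gives
$$
\|f_0^{}-f\|_{D_1^{(r)}}^{}\le\varepsilon^{\alpha}\,2^{1-\alpha},\qquad \alpha:=\frac{\log(1/(r+h))}{\log(1/r)}.
$$
For the inductive step at $k\in\{1,\ldots,n-1\}$, the induction hypothesis, (\ref{ineqs}), and the triangle inequality yield $\|\fk-f\|_{\Dkr}\le 2\vark$, while $\|\fk-f\|_{\Dk}\le 2$. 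The same three-circles argument then produces
$$
\|\fk-f\|_{D_{k+1}^{(r)}}^{}\le(2\vark)^{\alpha}\,2^{1-\alpha}=2\,\vark^{\,\alpha}.
$$

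Taking logarithms and using $\log\varepsilon_{k+1}^{}=e^{-\et h}\log\vark$, the inductive conclusion $2\vark^{\,\alpha}\le\varepsilon_{k+1}^{}$ is equivalent to
$$
\alpha\,\ge\,e^{-\et h}+\frac{\log 2}{|\log\vark|}.
$$
Since $|\log\vark|=e^{-\et\xk}|\log\varepsilon|\to\infty$ as $\varepsilon\to 0$ uniformly over the finite set $k\in\{1,\ldots,n-1\}$, the correction term can be made negligible for $\varepsilon$ small enough; this is precisely the role of the qualifier ``for all sufficiently small $\varepsilon$'' in the statement, and the analogous estimate $\alpha\ge e^{-\et h}+(1-\alpha)\log 2/|\log\varepsilon|$ handles the base case.

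The main obstacle is therefore the purely analytic inequality
$$
1-\frac{\log(1+h/r)}{\log(1/r)}\,>\,\exp\!\left(-\frac{(1+2h)\,h}{r\log(1/r)}\right),\qquad 0<r\le\tfrac12,\ \ 0<h\le\tfrac14.
$$
Here I would set $F(h)=\alpha(h)-\exp(-\et(h)\,h)$ (viewing $r$ as a fixed parameter), observe that $F(0)=F'(0)=0$ and $F''(0)>0$ throughout the parameter range, and close out the full interval $(0,\tfrac14]$ either by showing $F'(h)\ge 0$ on this interval or by comparing second-order Taylor expansions (using $\log(1+u)\ge u-u^2/2$ and $e^{-x}\le 1-x+x^2/2$). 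The factor $1+2h$ built into $\et$ is precisely the source of the second-order slack that makes this positivity work; it is also why $\et$ approaches the sharp constant $e$ in the limit $h\to 0$, $r\to 1/e$, matching the rate quoted for the Weierstrass method in the introduction.
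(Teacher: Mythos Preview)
Your approach is essentially the same as the paper's: induction on $k$, with Theorem~\ref{thm1} (three circles, rescaled to inner radius $r$ and outer radius $1$) carrying the error from $\Dkr$ to $D_{k+1}^{(r)}$, followed by the reduction to the analytic inequality $1-\log(1+h/r)/\log(1/r)>\exp(-(h+2h^2)/(r\log(1/r)))$ once the $\log 2$ correction is absorbed into ``sufficiently small $\varepsilon$.'' The only differences are cosmetic: you keep the outer bound $2$ explicitly (yielding $2\,\vark^{\,\alpha}$ rather than the paper's $(2\vark)^\alpha$, which if anything is the more careful bookkeeping), and you sketch a Taylor-expansion verification of the key inequality where the paper simply invokes a numerical check.
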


\begin{figure}
\begin{center}
\vskip .25in
\includegraphics[scale=.82]{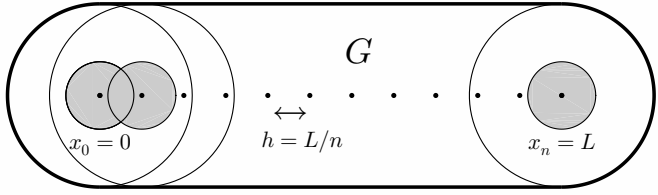}
\end{center}
\caption{\label{stadium} Chain-of-disks analytic continuation of a
function $f$ along a channel $G$ of half-width~$1$
where it is assumed to be bounded
and analytic.  One starts with $f$ known to accuracy
$\varepsilon_0^{}
=\varepsilon$ in the shaded disk \smash{$D_0^{(r)}$} of radius
$r$ about $x_0 = 0$; this is then used
to construct an expansion to accuracy~$\varepsilon_1^{}$ in the next
shaded disk about $x_1^{}=h$, and so on.  The 
best accuracy is achieved with $r = 1/e$ and
$h\to 0$, with the number of accurate digits diminishing at
the rate $\exp(-e\kern .3pt x)$.}
\end{figure}

\begin{proof}
We proceed by induction on $k$ in (\ref{induc2}).
The case $k=1$ follows from (\ref{basecase}) by
Theorem~\ref{thm1} (rescaled by a factor $R=1/r$).
Consider step $k+1$, assuming (\ref{induc2}) has been established
for previous steps.
Combining (\ref{ineqs}) and (\ref{induc2}) gives
\begin{equation}
\|\fk - f\|_{\Dkr}^{} \le 2\kern .5pt \vark.
\label{induc4}
\end{equation}
By Theorem~\ref{thm1} (with the same rescaling as
before), (\ref{induc4}) implies
\begin{equation}
\|\fk - f\|_{D_{k+1}^{(r)}}^{} \le 
(2\kern .5pt \vark)^\alpha, \quad \alpha = 
1 - {\log(1+h/r)\over \log (1/r)}. 
\label{induc3}
\end{equation}
(We avoid replacing $\log(1/r)$ by $-\log r$ since it can be
confusing to have to remember that $\log r$ is negative.)
We are done if we can show
\begin{displaymath}
(2\kern .5pt \vark)^\alpha \le \varepsilon_{k+1}^{}.
\end{displaymath}
or by taking logarithms,
\begin{displaymath}
\left( 1 - {\log(1+h/r)\over \log (1/r)}\right)
(\log 2 + \log(\vark)) \le
\log(\varepsilon_{k+1}^{}).
\end{displaymath}
By (\ref{vardef}), if we divide both sides by the negative
quantity $\log(\vark)$, this becomes
\begin{equation}
\left( 1 - {\log(1+h/r)\over \log (1/r)}\right)
\left({\log 2 \over \log(\vark)} + 1\right) \ge
\exp\left({-h-2h^2\over r \log(1/r)}\right) .
\label{toprove}
\end{equation}
Now suppose for a moment that $\vark$ is negligible.  Then
the $\log 2 /\log(\vark)$ term goes away and
the condition we must verify reduces to
\begin{displaymath}
1 - {\log(1+h/r)\over \log (1/r)}
\ge \exp\left({-h-2h^2\over r \log(1/r)}\right) .
\end{displaymath}
A numerical search readily confirms that this holds
with a strict
inequality over the indicated region $r\in(0,1/2)$, $h\in (0,1/4)$
(the coefficient of the term $-2\kern .3pt h^2$ was introduced
to ensure this).
Because of the assumption in the theorem statement that~$\varepsilon$ is
sufficiently small, this establishes (\ref{toprove}).
\qed
\end{proof}

The conclusion of
Theorem~\ref{chainthm} becomes memorable
in the limit $\varepsilon,h\to 0$.  
The parameter $\et$ of (\ref{vardef}) is
then minimized with the choice $r=1/e$, for which
it takes the value $\et = e$.
We conclude that with this optimal choice of $r$,
\begin{displaymath}
\hbox{\em The number of accurate digits in chain-of-disks continuation}
\end{displaymath}

\vskip -28pt 

\begin{displaymath}
\hbox{\em along a channel of half-width\/ $1$ 
decays at the rate $\exp(-e\kern .3pt x)$.}
\end{displaymath}

In a field as established as complex analysis, it is hard to be
sure that anything is
entirely new, but I am not aware of a previous estimation of
loss of digits at the rate $\exp(-e \kern .3pt x)$ for
chain-of-disks continuation.  The
general observation of exponential loss of information is more
than a century old.   Henrici~\cite{hen66} writes (his italics)
\begin{displaymath}
\hbox{\em The early vectors$\dots$ must be computed more accurately than 
the late ones}
\end{displaymath}
and he gives credit for related work to Mittag-Leffler,
Painlev\'e, Zeller, and Lewis~\cite{lewis}.

It is interesting to note what our estimates suggest for what
might be considered a very natural test problem for analytic continuation.
Suppose we have a function like $f(z) = \sqrt z$ 
that is known to be known to be bounded and analytically continuable
along any curve in the punctured disk $0 < |z|<2$.  If we start near $z=1$ with a certain
accuracy $\varepsilon$ and go around the origin and back to $z=1$ again, how much
accuracy will remain?
We will not attempt to give a sharp solution to this problem,
but it is the example that motivated our choice of a strip
of length $2\pi$ for the numbers quoted in the introduction.  
Our estimates suggest
that the number of accurate digits may be reduced by a
factor as great as
$(\pi/4)\exp(\pi^2) \approx \hbox{15,000}$, or 
$\exp(2\kern .4pt \pi e) \approx \hbox{26,000,000}$ for the chain-of-disks method.

\section{Conclusion}

A compelling presentation of the practical side of
analytic continuation can be found in the book to appear
by Fornberg and Piret~\cite{fp}.  In the case of exactly known
functions, although one could use the
chain-of-disks idea in principle, Taylor series play little role 
in practice.  A far more
powerful approach is to find an analytical method to transform one
formula defining a function (a formula being after all a finite
object, unlike an infinite set of Taylor coefficients)
into another formula with a new region of validity.
For the most famous of all examples, the
Dirichlet series for the Riemann zeta function converges
only for $\Real z > 1$, but other representations extend $\zeta(z)$
to the whole complex plane.

The present paper has concerned the case of inexactly known
functions.  Here, for continuation of functions from a disk to a
larger disk, or from an interval to an ellipse, algorithms related
to Taylor or Chebyshev series are effective, as has
been discovered by various authors and we have illustrated by
Figure~\ref{chebfig} from Chebfun.
A third equivalent
context would be analytic continuation of a periodic function into
a strip by Fourier series.
The question is, what can one
do to continue a function beyond the disk/ellipse/strip of convergence
of its Taylor/Chebyshev/Fourier series?
Our theorems show that if all one knows is analyticity and boundedness
along certain channels, then 
accuracy may be lost at a precipitous exponential
rate, better than the chain-of-disks but only by a constant.
However, the assumption of analyticity just
in a channel is more pessimistic than necessary in many applications.
Functions arising in applications rarely have natural boundaries or
other beautiful pathologies of analytic function theory, however
generic such structures may be from a certain abstract point of
view; they are far more likely to be analytic everywhere apart from
certain poles and branch points.  In practice, rational functions
are the crucial tool for analytic continuation in such cases, and
when they work, their convergence is typically exponential, just as
we have found for series-based methods in a disk~\cite{bgm,eiermann,atap}.
It would be an
interesting challenge to develop theorems for meromorphic functions
analogous to what we have established here in the analytic case,
and a discussion with some of this flavor can be found
in~\cite{millerb}.

\begin{acknowledgements}
The early stages of this work benefited from discussions with Marco
Fasondini, Bengt Fornberg, Yuji Nakatsukasa, and Olivier S\`ete.
The first version of the
paper was written during a sabbatical
visit to the Laboratoire de l'Informatique du Parall\'elisme
at ENS Lyon in 2017--18 hosted by Nicolas Brisebarre, Jean-Michel Muller,
and Bruno Salvy.  It was improved in revision by
suggestions from Marco Fasondini, Daan Huybrechs,
Alex Townsend, Marcus Webb, Kuan Xu, and especially Elias Wegert.
\end{acknowledgements}


\begin{thebibliography}{}

\bibitem{chopping}
Aurentz, J. L., Trefethen, L. N.:
Chopping a Chebyshev series.
ACM Trans.\ Math.\ Softw.\ {\bf 43}, 33:1--33:21 (2017)

\bibitem{akt}
Austin, A. P., Kravanja, P., Trefethen, L. N.:
Algorithms based on analytic function values in roots of unity.
SIAM J. Numer.\ Anal.\ {\bf 52}, 1795--1821 (2014)

\bibitem{bgm}
Baker, G. A., Jr., Graves-Morris, P.:
Pad\'e Approximants, 2nd ed.\ \ Cambridge U. Press (1996)

\bibitem{batenkov}
Batenkov, D., Demanet, L., Mhaskar, H. N.:
Stable soft extrapolation of entire functions.
Inverse Problems\ {\bf 35}, 015011 (2019) 

\bibitem{born}
Bornemann, F.:
Accuracy and stability of computing high-order derivatives of analytic
functions by Cauchy integrals.
Found.\ Comput.\ Math.\ {\bf 11}, 1--63 (2011)

\bibitem{cannm}
Cannon, J. R., Miller, K.:
Some problems in numerical analytic continuation.
SIAM J. Numer.\ Anal.\ {\bf 2}, 87--98 (1965)

\bibitem{demtow}
Demanet, L., Townsend, A.:
Stable extrapolation of analytic functions.
Found.\ Comp.\ Math.\ {\bf 19}, 297--331 (2019)

\bibitem{douglas}
Douglas, J.:
A numerical method for analytic continuation.
Boundary Value Problems in Differential Equations,
U. Wisconsin Press, Madison, WI, 179--189 (1960)

%\bibitem{douglasg}
%J. Douglas and T. M. Gallie, Jr.:
%An improper boundary problem,
%Duke Math.\ J.\ {\bf 26} (1959), 339--347.

\bibitem{chebfun}
Driscoll, T. A., Hale, N., Trefethen, L. N.:
Chebfun Guide. Pafnuty Publications, Oxford (2014).
See also {\tt www.chebfun.org}

\bibitem{eiermann}
Eiermann, M.: On the convergence of
Pad\'e-type approximants to analytic functions.
J. Comput.\ Appl.\ Math.\ {\bf 10}, 219--227 (1984)

\bibitem{fp}
Fornberg, B., Piret, C.:
An Illustrated Introduction to Analytic Functions.
Book manuscript (2018)

\bibitem{franklin}
Franklin, J.:
Analytic continuation by the fast Fourier transform.
SIAM J. Sci.\ Stat.\ Comput.\ {\bf 11}, 112--122 (1990)

\bibitem{fdfd}
Fu, C.-L., Deng, Z.-L., Feng, X.-L., Dou, F.-F.:
A modified Tikhonov regularization for stable analytic continuation.
SIAM J. Numer.\ Anal.\ {\bf 47}, 2982--3000 (2009)

\bibitem{fdfq}
Fu, C.-L., Dou, F.-F., Feng, X.-L., Qian, Z.:
A simple regularization method for stable analytic continuation.
Inverse Problems\ {\bf 24}, 1--15 (2008)

\bibitem{fzcm}
Fu, C.-L., Zhang, Y.-X., Cheng, H., Ma, Y.-J.:
Numerical analytic continuation on bounded domains.
Engr.\ Anal.\ with Boundary Elts.\ {\bf 36}, 493--504 (2012)

%\bibitem{gaillac}
%$\bullet$
%Gaillack Gautier:
%Estimates

%\bibitem{grab}
%$\bullet$
%Grabovsky Hovsepyan:
%Optimal error estimates

\bibitem{hen66}
Henrici, P.:
An algorithm for analytic continuation.
SIAM J. Numer.\ Anal.\ {\bf 3}, 67--78 (1966)

\bibitem{henrici}
Henrici, P.:
Applied and Computational Complex Analysis I: Power
Series---Integration---Conformal Mapping---Location of Zeros.
John Wiley (1974)

\bibitem{hille}
Hille, E.:
Analytic Function Theory II.
Chelsea (1987)

%\bibitem{kubla}
%V. N. Kublanovskaya:
%Application of anlaytic continuation in
%numerical analysis by means of change of variables,
%Trudy Mat.\ Inst.\ Steklov, 53} (1959), 145--185.

\bibitem{lewis}
Lewis, G.:
Two methods using power series for solving analytic initial value problems.
AFC Res.\ Dev.\ Rpt.\ NYO-2881 (1960)

\bibitem{miller}
Miller, K.:
Least squares methods for ill-posed problems with a
prescribed bound. SIAM J. Math.\ Anal.\ {\bf 1}, 52--74 (1970)

\bibitem{millerb}
Miller, K.:
Stabilized numerical analytic prolongation with poles.
SIAM J. Appl.\ Math.\ {\bf 18}, 346--363 (1970)

%\bibitem{momm}
%$\bullet$
%Momm, S.:
%Lower bounds for the modulus of analytic functions.
%Bull.\ Lond.\ Math.\ Soc.\ {\bf 22}, 239--244 (1990)

%\bibitem{aaa}
%Y. Nakatsukasa, O. S\`ete, Trefethen, L. N.:
%The AAA algorithm for rational approximation,
%SIAM J. Sci.\ Comp., to appear.

\bibitem{niet}
Niethammer, W.:
Ein numerisches Verfahren zur analytischen Fortsetzung.
Numer.\ Math.\ {\bf 21}, 81--92 (1973)

%\bibitem{okada}
%$\bullet$
%Okada, Y.: 
%\"Uber die Ann\"aherung analytischer Funktionen.
%Math.\ Z.\ {\bf 23}, 62--71 (1925)

%\bibitem{pain}
%$\bullet$
%Painlev\'e, P.:
%Sur le dev\'eloppement d'une branche uniform de fonction analytique.
%C. R. Acad.\ Sci.\ Paris\ {\bf 128}, 1277--1280 (1899)

%\bibitem{phrag}
%E. Phragm\'en E. Lindel\"of:
%Sur une extension d'un principe classique
%de l'analyse,
%Acta Math., 31} (1908), 381--406 (1908)

\bibitem{reichel}
Reichel, L.:
Numerical methods for analytic continuation and mesh generation.
Constr.\ Approx.\ {\bf 2}, 23--39 (1986)

\bibitem{rudin}
Rudin, W.:
Real and Complex Analysis.
McGraw-Hill (1966)

\bibitem{stef}
Stefanescu, I. S.:
On the stable analytic continuation with a condition of
uniform boundedness.
J. Math.\ Phys.\ {\bf 27}, 2657--2686 (1986)

\bibitem{atap}
Trefethen, L. N.:
Approximation Theory and Approximation Practice, extended edition.
SIAM (2019)

\bibitem{vessella}
Vessella, S.:
A continuous dependence result in the analytic continuation problem.
Forum Math.\ {\bf 11}, 695--703 (1999)

%\bibitem{van}
%A. van Wijngaarden:
%A transformation of formal series,
%Indag.\ Math., 15} (1953), 522--543.

\bibitem{wang}
Wang, H., Huybrechs, D.:
Fast and accurate computation of Chebyshev coefficients in the complex plane.
IMA J. Numer.\ Anal.\ {\bf 37}, 1150--1174 (2016)

\bibitem{wegert}
Wegert, E.:
Visual Complex Functions.
Birkh\"auser (2012)

%\bibitem{weier}
%K. Weierstrass:
%Zur Functionenlehre,
%Monatsber.\ K\"onigl.\ Akad.\ Wiss.\ Berlin, 1880; see also
%Mathematische Werke.

\end{thebibliography}
\end{document}